\newtheorem{Thm}{Theorem}[section]
\newtheorem{Lem}[Thm]{Lemma}
\newtheorem{Prop}[Thm]{Proposition}
\newtheorem{Cor}[Thm]{Corollary}
\theoremstyle{definition}
\newtheorem{Def}[Thm]{Definition}
\newtheorem{Rem}[Thm]{Remark}
\newtheorem{Que}[Thm]{Question}
\newtheorem*{Ack}{Acknowledgements}
\numberwithin{equation}{section}
\renewcommand{\le}{\ensuremath{\leqslant}}
\renewcommand{\ge}{\ensuremath{\geqslant}}
\newcommand{\lhs}{\operatorname{lhs}}
\newcommand{\rhs}{\operatorname{rhs}}
\newcommand{\supp}{\operatorname{supp}}
\newcommand{\one}{\mathbbm{1}}
\newcommand{\A}{\mathcal{A}}
\newcommand{\mc}{\mathcal}
\newcommand{\N}{\mathbb{N}}
\title[Kernels of operators on  Banach spaces]{Kernels of operators on Banach spaces\\ induced by almost disjoint families}
\dedicatory{In memoriam: H.~G.~Dales (1944--2022)}
\author[B.~Horv\'{a}th]{Bence Horv\'{a}th} 
\address{(B.~Horv\'{a}th) Baloise Insurance Ltd, Aeschengraben~21, Postfach 2275, Basel 4002, Switzerland}
\email{hotvath@gmail.com}
\author[N.~J.~Laustsen]{Niels Jakob Laustsen} 
\address{(N.~J.~Laustsen) School of Mathematical Sciences, Fylde
  College, Lancaster University, Lancaster LA1 4YF, United Kingdom}
\email{n.laustsen@lancaster.ac.uk}
\begin{document}	

\begin{abstract} Let~$\mathcal{A}$ be an almost disjoint family of subsets of an infinite set~$\Gamma$, and denote by~$X_{\mathcal{A}}$ the closed subspace of~$\ell_\infty(\Gamma)$ spanned by the indicator functions of intersections of finitely many sets in~$\mathcal{A}$. We show that if~$\mathcal{A}$ has cardinality greater than~$\Gamma$, then the closed subspace of~$X_{\mathcal{A}}$ spanned by the indicator functions of sets of the form $\bigcap_{j=1}^{n+1}A_j$, where $n\in\N$ and $A_1,\ldots,A_{n+1}\in\mathcal{A}$ are distinct,  cannot be the kernel of any bounded operator \mbox{$X_{\mathcal{A}}\rightarrow \ell_{\infty}(\Gamma)$}. As a consequence, we deduce that the subspace   
\[ \bigl\{ x\in \ell_{\infty}(\Gamma) : \text{the set}\  \{\gamma \in \Gamma : \lvert x(\gamma)\rvert > \varepsilon \}\ \text{has cardinality smaller than}\ \Gamma\ \text{for every}\ \varepsilon>0\bigr\} \] 
of~$\ell_\infty(\Gamma)$ is not the kernel of any bounded operator on~$\ell_\infty(\Gamma)$; this generalises results of Kalton and of Pe\l{}czy\'{n}ski and Sudakov.

The situation is more complex for the Banach space~$\ell_\infty^c(\Gamma)$ of countably supported, bounded functions defined on an uncountable set~$\Gamma$. We show that it is undecidable in \textsf{ZFC} whether every bounded operator on~$\ell_\infty^c(\omega_1)$ which vanishes on~$c_0(\omega_1)$ must vanish on a subspace of the form~$\ell_\infty^c(A)$ for some uncountable subset~$A$ of~$\omega_1$.\bigskip

To appear in \emph{Houston Journal of Mathematics.}\vspace{-1cm}
\end{abstract}
\maketitle

\let\thefootnote\relax\footnote{\subjclass{2020}{\textit{~Mathematics Subject Classification.} Primary
    46B26,
    46E15,
47B01; 
    Secondary
    18A30
    47B38.%
  }
  
\keywords{\textit{Key words and phrases.} Non-separable Banach space, bounded operator, kernel, almost disjoint family.}}

\section{Introduction and statement of the main result}\label{S:intro}
\noindent
The problem of classifying all complemented subspaces --- that is, the closed subspaces which arise as the kernels of bounded idempotent operators --- of a given Banach space is fundamental in the study of Banach spaces. This problem has a very natural generalisation, obtained by omitting the word ``idempotent'',   originally formulated in \cite{LaW}:
\begin{Que}\label{Q:LW}
  Which closed subspaces of a Banach space~$X$ can be realised as the kernel of a bounded operator $X \rightarrow X$?
  In particular, for which Banach spaces $X$ is it true that every closed subspace of $X$ is the kernel of some bounded operator $X\to X$?
\end{Que}

In the positive direction, White and the second author \cite[Proposition~2.1]{LaW} observed that whenever~$W$ is a closed subspace of a Banach space $X$ such that~$X/W$ is separable, there is a bounded operator $X \rightarrow X$ whose kernel is~$W$. Consequently, \Cref{Q:LW} is of interest only for  non-separable Banach spaces~$X$.

The focus in~\cite{LaW} was on reflexive spaces; its main result \cite[Corollary~2.8]{LaW} states that the dual space~$X^*$ of Wark's non-separable reflexive Banach space~$X$ with ``few operators'' introduced in~\cite{Wark} contains a closed subspace which is not the kernel of any bounded operator $X^*\to X^*$.  More recently, Arnott and the second author~\cite{AL} have shown that for $1<p<\infty$ and an arbitrary index set~$\Gamma$, every closed subspace of $X=\ell_p(\Gamma)$ and  $X=c_0(\Gamma)$ is the kernel of a bounded operator $X\to X$, whereas~$\ell_1(\Gamma)$ contains closed sub\-spaces which are not the kernel of any bounded operator $\ell_1(\Gamma)\to\ell_1(\Gamma)$ whenever~$\Gamma$ is un\-countable. 

Many years prior to this work,  Kalton \cite[Proposition~4]{Ka} showed that~$c_0$ is not the kernel of any bounded operator $\ell_\infty\to\ell_\infty$, thereby
generalising Phillips' Theorem that~$c_0$ is not complemented in~$\ell_\infty$. More precisely, building on the approach Whitley~\cite{Whit} used in his simplified proof of Phillips' Theorem, Kalton proved that whenever the kernel of a bounded operator $T\colon\ell_\infty\to\ell_\infty$ contains~$c_0$, there is an infinite subset~$M$ of~$\N$ such that $\ell_\infty(M)\subseteq \ker T$, where we have identified~$\ell_\infty(M)$ with the subspace \mbox{$\{x\in\ell_\infty : x(n)=0\ (n\in\N\setminus M)\}$} of~$\ell_\infty$. 

Our main theorem generalises Kalton's result in two ways: first, it replaces the index set~$\N$ with an arbitrary infinite set~$\Gamma$, and second, it applies to operators defined on certain $C(K)$\nobreakdash-sub\-spaces of~$\ell_\infty(\Gamma)$ that have found a number of significant applications recently. Before we can state it precisely, we must introduce some notation and terminology.

Since our results depend only on the cardinality of the index set~$\Gamma$, throughout this paper $\Gamma$~will denote an infinite cardinal number unless otherwise stated, and we use the notation
\begin{align*}
[\Gamma]^\Gamma &= \{ A\subseteq\Gamma : \lvert A\vert = \Gamma \}\qquad\text{and}\qquad 
[\Gamma]^{<\Gamma} = \{ A\subseteq\Gamma : \lvert A\rvert < \Gamma \},
\end{align*}
where~$\lvert A\rvert$ denotes the cardinality of the set~$A$. 
A family $\mc{A}\subset[\Gamma]^\Gamma$ is called \emph{almost disjoint} if $A\cap B\in [\Gamma]^{<\Gamma}$ whenever $A,B\in\mc{A}$ are distinct. 
A famous result of Sierpi\'{n}ski~\cite{Sier} states that~$[\Gamma]^\Gamma$ contains  an almost disjoint family of cardinality greater than~$\Gamma$.

We write $\one_A$ for the indicator function of a subset~$A$ of~$\Gamma$, considered as an element of~$\ell_\infty(\Gamma)$. The following closed subspace of~$\ell_\infty(\Gamma)$ induced by a family~$\mc{A}$ of subsets of~$\Gamma$ is at the heart of our work: 
\begin{equation}\label{YAdefn} X_{\mc{A}} = \overline{\operatorname{span}}\, \{\one_{\bigcap_{j=1}^n A_j} : n\in\N,\,A_1,\ldots,A_n\in\mc{A}\} = \overline{\operatorname{span}}\, \{\one_A : A\in\mc{A}\cup\A_\doublecap\},
\end{equation}  
where we have introduced the symbol 
\begin{equation}\label{Eq:Adoublecap} \A_\doublecap = \biggl\{\bigcap_{j=1}^{n+1} A_j : n\in\N,\,A_1,\ldots,A_{n+1}\in\mc{A},\, A_j\ne A_k\ \text{for}\ j\ne k\biggr\} \end{equation}
for the collection of finite intersections of at least two distinct sets in~$\A$. With this notation at hand, we can state our main result as follows.
\begin{Thm}\label{mainthm} Let $\mc{A}\subset[\Gamma]^\Gamma$ be an almost disjoint family of cardinality greater than~$\Gamma$, and suppose that $T\colon X_{\mc{A}}\to\ell_\infty(\Gamma)$ is a bounded operator for which $T\one_A=0$ for every $A\in\A_\doublecap$. Then~$\A$ contains a subset~$\mc{B}$ for which $\lvert\mc{A}\setminus\mc{B}\rvert\leqslant \Gamma$ and   $T\one_B=0$ for every  $B\in\mc{B}$.
\end{Thm}

Note that $\one_B\notin\overline{\operatorname{span}}\, \{\one_A : A\in\A_\doublecap\}$ for every $B\in\A$  (see \Cref{L:suppW0} below for details), so \Cref{mainthm} implies that $\overline{\operatorname{span}}\, \{\one_A : A\in\A_\doublecap\}$ is a  closed subspace of~$X_\A$ which cannot  be the kernel of any bounded operator $X_{\mc{A}}\to\ell_\infty(\Gamma)$.
This answers the second part of \Cref{Q:LW} in the negative for both  $X = X_{\mc{A}}$ (by 
  applying  \Cref{mainthm} to the composition of
 an operator $X_{\mc{A}}\to X_{\mc{A}}$ with the inclusion map $X_{\mc{A}}\to\ell_\infty(\Gamma)$) and 
 $X=\ell_\infty(\Gamma)$ (by applying  \Cref{mainthm} to the restriction to~$X_{\mc{A}}$ of an operator $\ell_\infty(\Gamma)\to\ell_\infty(\Gamma)$).
 
 In fact, with a small amount of extra effort, we can prove a variant of the latter conclusion that  will simultaneously generalise the previously mentioned result of Kalton \cite[Proposition~4]{Ka} to uncountable index sets and a theorem of Pe\l{}czy\'{n}ski and Sudakov \cite[Theorem~1]{PS} stating that the closed subspace
\begin{align}\label{eq:subspace}
\ell_{\infty}^{<}(\Gamma) &= \bigl\{ x\in \ell_{\infty}(\Gamma) : \{\gamma \in \Gamma : \lvert x(\gamma)\rvert > \varepsilon \}\in[\Gamma]^{<\Gamma}\ \text{for every}\ \varepsilon>0\bigr\}
\end{align}
is not complemented in~$\ell_\infty(\Gamma)$ for any infinite cardinal number~$\Gamma$. 

\begin{Cor}\label{corellinty} Let $T\colon\ell_\infty(\Gamma)\to\ell_\infty(\Gamma)$ be a bounded operator whose kernel contains~$\ell_{\infty}^{<}(\Gamma)$. 
Then there exists an almost disjoint family~$\mc{B}\subset [\Gamma]^\Gamma$ of cardinality greater than~$\Gamma$ such that
$X_{\mc{B}}\subseteq\ker T$. 
In particular, $\ell_{\infty}^{<}(\Gamma)$ is a closed subspace of~$\ell_\infty(\Gamma)$ which is not the kernel of any bounded operator $\ell_\infty(\Gamma)\to\ell_\infty(\Gamma)$.
\end{Cor}  

\begin{Rem}  We observe that $\ell_{\infty}^{<}(\N) = c_0$, so \Cref{corellinty} is a genuine generalisation of Kalton's result, which corresponds to $\Gamma=\N$.
\end{Rem}

We shall prove \Cref{mainthm} and \Cref{corellinty} in \Cref{S:proofmain}, using an elementary combinatorial identity~\eqref{CombLemma:eq2} as our key tool. In \Cref{S:CountablySupp} we explore how our results might carry over to the Banach space~$\ell_\infty^c(\Gamma)$ of countably supported, bounded functions defined on an uncountable set~$\Gamma$. Our most interesting conclusion is that the following statement is independent of \textsf{ZFC}:
\begin{equation}\label{ZFC:indep}
\begin{array}{l} \textit{Let}\ T\colon \ell_\infty^c(\omega_1)\to\ell_\infty^c(\omega_1)\  \textit{be a bounded operator whose kernel contains}\  c_0(\omega_1).\\ \textit{Then}\ \ell_{\infty}^c(A)\subseteq\ker T\ \textit{for some uncountable subset}\ A\ \textit{of}\ \omega_1.\end{array}
\end{equation}

\begin{Rem}\label{R:Jan2024}
\begin{enumerate}[label={\normalfont{(\roman*)}}]
\item\label{R:Jan2024:i}  Johnson and Lindenstrauss~\cite{JL} initiated the study of Banach spaces induced by an almost disjoint family~$\mc{A}$ of infinite subsets of~$\N$. In this case it is conventional to consider a slightly larger space than our space~$X_\A$, name\-ly $\overline{X_\A+c_0}$. Clearly, the two spaces are equal if and only if every singleton~$\{n\}$, for $n\in\N$,  arises as the intersection of finitely many sets in~$\A$. Otherwise, by treating the atoms of~$\A_\doublecap$ as single points, one can replace~$\A$ with an almost disjoint family for which equality holds. Therefore, we shall not distinguish between the two approaches and focus on the space~$X_\A$ defined by~\eqref{YAdefn}.
\item\label{R:Jan2024:ii} In the general case  where $\mc{A}\subset [\Gamma]^\Gamma$ is an almost disjoint family for an infinite cardinal number~$\Gamma$ (possibly uncountable),  the Banach space~$X_\A$  has a standard representation as a $C(K)$-space.

The easiest way to see this is to begin with the case of complex scalars: then~$\ell_\infty(\Gamma)$ is a commutative $C^*$-algebra with respect to the pointwise operations, and~$X_\A$ is a closed, self-adjoint subalgebra of it. Therefore the commutative Gel\-fand--Nai\-mark Theorem implies that~$X_\A$ is isometrically $*$-isomorphic to~$C_0(K_\A)$ for some locally compact Hausdorff space~$K_\A$, which  can be (abstractly) described as either the maximal ideal space of the algebra~$X_\A$ or the Stone space of the Boolean subring generated by~$\A$ within the power set of~$\Gamma$.
  
The analogous conclusion for real scalars follows by considering the self-adjoint parts of the above $C^*$-algebras because a real-valued function in either~$\ell_\infty(\Gamma)$, $X_\A$ or~$C_0(K_\A)$ is simply a self-adjoint element of the corresponding complex algebra. 

Combining the isomorphism $X_\A\cong C_0(K_\A)$  with \Cref{mainthm}, we conclude that~$C_0(K_\A)$ contains a closed subspace which is not the kernel of any bounded operator $C_0(K_\A)\to\ell_\infty(\Gamma)$, thereby answering the second part of \Cref{Q:LW} in the negative for $X=C_0(K_\A)$. 
\item For $\Gamma=\N$, the topological space~$K_\A$ has a long and illustrious history going back  nearly a century to the work of Alexandroff and Urysohn~\cite{alexandroff}. It has since appeared under many names, including \emph{Mr\'{o}wka space,} \emph{$\Psi$-space,} \emph{AU-compactum} and \emph{Isbell--Mr\'{o}wka space}. One reason the case $\Gamma=\N$ is attractive is that~$K_\A$ has a nice concrete de\-scrip\-tion: it contains two kinds of points, labelled~$x_n$ for~$n\in\N$ and $y_A$ for $A\in\A$, with the former being isolated, while a neighbourhood basis for the latter consists of all sets of the form $\{x_n:n\in A\setminus F\}\cup\{y_A\}$, where $F\subset A$ is finite.
\end{enumerate}  
\end{Rem}

\section{The proofs of  \Cref{mainthm} and \Cref{corellinty}}\label{S:proofmain}

\noindent Let $\mathbb{K}=\mathbb{R}$ or $\mathbb{K}=\mathbb{C}$ be the scalar field. As usual, the \emph{support} of an element $x\in\ell_\infty(\Gamma)$ is $\supp x = \{ \gamma\in\Gamma : x(\gamma)\ne 0\}$. For an almost disjoint family $\mc{A}\subset[\Gamma]^\Gamma$, define
\begin{equation}\label{mainthm:eq1} V_\A = \operatorname{span} \{ \one_A : A\in\A_\doublecap\}\qquad\text{and}\qquad W_\A = \overline{V_\A}.
\end{equation}
We shall use the following simple fact several times, so we state it formally for ease of reference.

\begin{Lem}\label{L:suppW0}
  Let $\mc{A}\subset[\Gamma]^\Gamma$ be an almost disjoint family.  Then
  \begin{equation}\label{L:suppW0:eq1} \supp v\in[\Gamma]^{<\Gamma}\qquad (v\in V_\A).
\end{equation}    
  Suppose that $\lvert\mc{A}\rvert>\Gamma$. Then $W_{\mc{A}}\ne X_{\mc{B}}$ for every subset $\mc{B}$ of~$\mc{A}$.
\end{Lem}

\begin{proof}    
  The almost disjointness of~$\mc{A}$ implies that $\A_\doublecap\subseteq[\Gamma]^{<\Gamma}$. This proves~\eqref{L:suppW0:eq1} because every element of~$V_\A$ vanishes outside a finite union of sets in~$\A_\doublecap$.

  Suppose that $\lvert\mc{A}\rvert>\Gamma$. Then~$\mc{A}$ cannot consist of disjoint sets, so $W_{\mc{A}}\ne\{0\} = X_\emptyset$. Now consider a non-empty subset~$\mc{B}$ of~$\mc{A}$,   and take  $B\in\mc{B}$. The indicator func\-tion~$\one_B$ belongs to~$X_{\mc{B}}$ by definition. We claim that $\one_B\notin W_{\mc{A}}$, and therefore $X_{\mc{B}}\not\subseteq W_{\mc{A}}$. More precisely, we shall show that~$\one_B$   has distance~$1$ to the dense subspace~$V_\A$ of~$W_{\mc{A}}$. Indeed, the first part of the proof shows that $\supp v\in[\Gamma]^{<\Gamma}$ for	every $v\in V_\A$, whereas $\lvert B\rvert =\Gamma$, so~$B$ contains an element~$\beta$ such that $v(\beta)=0$. Consequently $\lVert \one_B - v\rVert_\infty\geqslant \lvert (\one_B - v)(\beta)\rvert  = 1$, which proves the claim.
\end{proof}

Our next lemma can be viewed as a generalisation of the fact that $c_0(\mathbb{R})$ embeds iso\-met\-ri\-cal\-ly in\-to~$\ell_\infty/c_0$ (see, \emph{e.g.,} \cite[Theorem~1.25]{Avilesetal}).  Before we state it, recall from \Cref{R:Jan2024}\ref{R:Jan2024:ii} that~$X_{\mc{A}}$ is a closed subalgebra of~$\ell_{\infty}(\Gamma)$. Since~$W_{\A}$ is a closed ideal of~$X_{\A}$, the quotient $X_{\A}/W_{\A}$ is a commutative Banach algebra (in fact a $C^*$-algebra in the complex case).
We write  $(e_A)_{A\in\mathcal{A}}$ for the unit vector basis of~$c_0(\mathcal{A})$; that is,
$e_A = \one_{\{A\}}\in\ell_\infty(\mathcal{A})$ for $A\in\mathcal{A}$. We use this notation to ensure a clear distinction between the elements~$e_A\in c_0(\A)$ and~$\one_A\in X_{\mathcal{A}}$.

\begin{Lem}\label{L:c0A} Let $\mc{A}\subset[\Gamma]^\Gamma$ be an almost disjoint family. Then the map given by
\begin{equation}\label{L:c0A:eq0}
  \psi(e_A) = \one_{A} + W_{\mc{A}}\qquad (A \in\mc{A})
\end{equation}
extends uniquely to an isometric algebra isomorphism $\psi \colon c_0(\mc{A}) \to X_{\mc{A}} / W_{\mc{A}}$.
\end{Lem}

In the proof we require the following elementary combinatorial lemma, which may be thought of as a weighted version of the Inclusion--Exclusion Principle, as it is stated in \cite[Proposition~6.62]{MT}, for example. Results of this kind are well-known in the literature, but since we have not been able to find this particular variant anywhere or deduce it from known results, we include a short, self-contained proof of it in \Cref{app:A}.
 
\begin{Lem}\label{CombLemma}
  Let $A_1,\ldots,A_m$ be distinct subsets of~$\Gamma$ for some $m\in\N$, set
  \begin{align}\label{CombLemma:eq1} 
  	A^d_j = A_j\setminus\underset{k\ne j}{\bigcup_{k=1}^m}A_k\qquad (j\in\{1,\ldots,m\}),  
  \end{align}
 and take $\sigma_1,\ldots,\sigma_m\in\mathbb{K}$. Then
  \begin{align}\label{CombLemma:eq2} 
  \sum_{j=1}^m\sigma_j \one_{A^d_j} = \sum_{\emptyset\ne N\subseteq\{1,\ldots,m\}} (-1)^{\lvert N\rvert+1}\Bigl(\sum_{j\in N}\sigma_j\Bigr)\one_{\bigcap_{j\in N} A_j}.
  \end{align}
\end{Lem}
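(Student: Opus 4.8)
The plan is to verify the identity \eqref{CombLemma:eq2} pointwise, since two elements of $\ell_\infty(\Gamma)$ coincide precisely when they take the same value at every $\gamma\in\Gamma$. So I would fix $\gamma\in\Gamma$ and introduce the ``trace'' $S=\{j\in\{1,\ldots,m\}:\gamma\in A_j\}$ recording which of the sets $A_1,\ldots,A_m$ contain $\gamma$. Both sides, evaluated at $\gamma$, will depend only on $S$, and the whole argument reduces to a case distinction on~$\lvert S\rvert$.

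On the left-hand side, the defining formula \eqref{CombLemma:eq1} shows that $\gamma\in A^d_j$ holds exactly when $\gamma$ lies in $A_j$ but in none of the other sets, that is, precisely when $S=\{j\}$. Hence the left-hand side at $\gamma$ equals $\sigma_{j_0}$ if $S=\{j_0\}$ is a singleton, and equals $0$ whenever $\lvert S\rvert=0$ or $\lvert S\rvert\geq 2$. For the right-hand side, I would use that $1_{\bigcap_{j\in N}A_j}(\gamma)=1$ if and only if $N\subseteq S$, so that only the subsets $N$ with $\emptyset\ne N\subseteq S$ contribute. Interchanging the order of summation rewrites the surviving sum as $\sum_{j\in S}\sigma_j\,c_j$, where $c_j=\sum_{N\subseteq S,\,j\in N}(-1)^{\lvert N\rvert+1}$.

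The crux is then to evaluate $c_j$. Writing each admissible $N$ uniquely as $N=\{j\}\cup N'$ with $N'\subseteq S\setminus\{j\}$, and tracking the sign through $(-1)^{\lvert N\rvert+1}=(-1)^{\lvert N'\rvert}$, turns $c_j$ into $\sum_{N'\subseteq S\setminus\{j\}}(-1)^{\lvert N'\rvert}=(1-1)^{\lvert S\rvert-1}$ by the binomial theorem. This vanishes unless $\lvert S\rvert=1$, in which case it equals~$1$. Comparing the resulting value of the right-hand side with the left-hand side in each of the three cases $\lvert S\rvert=0$, $\lvert S\rvert=1$ and $\lvert S\rvert\geq 2$ then gives equality at~$\gamma$, and since $\gamma$ was arbitrary, the identity follows.

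The computation is entirely elementary, and I do not expect any genuine obstacle; the only point requiring a little care is the bookkeeping in the double sum on the right, where one must correctly propagate the sign $(-1)^{\lvert N\rvert+1}$ through the substitution $N=\{j\}\cup N'$ so that the alternating sum collapses to $(1-1)^{\lvert S\rvert-1}$. It is worth noting that the distinctness hypothesis on $A_1,\ldots,A_m$ plays no role in this pointwise verification (if two of the sets coincided, the relevant $\gamma$ would force $\lvert S\rvert\geq 2$ and both sides would vanish); it is natural to impose it merely so that the sets $A^d_j$ and the intersections $\bigcap_{j\in N}A_j$ are indexed without redundancy.
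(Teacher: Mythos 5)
Your proposal is correct and follows essentially the same route as the paper: both verify the identity pointwise, reduce to the indices $j$ with $\gamma\in A_j$, and collapse the alternating sum via the binomial theorem to $(1-1)^{\lvert S\rvert-1}$, the only cosmetic difference being that the paper counts the subsets of each size containing a fixed index (obtaining $\binom{n-1}{k-1}$ and summing over $k$) whereas you substitute $N=\{j\}\cup N'$ and sum over $N'\subseteq S\setminus\{j\}$ directly. Your closing observation that distinctness of the $A_j$ is not actually needed for the pointwise verification is also accurate.
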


\begin{proof}[Proof of Lemma~{\normalfont{\ref{L:c0A}}}] 
We can define a linear map $\psi\colon\operatorname{span}\{ e_A : A\in \mc{A}\}\to X_{\mc{A}}/W_{\mc{A}}$ by~\eqref{L:c0A:eq0}. Since its domain $\operatorname{span}\{ e_A : A\in \mc{A}\}$  is a dense subalgebra of~$c_0(\mc{A})$, it will suffice to show that this map is multiplicative, isometric and has dense range. 

The multi\-pli\-ca\-tivity of~$\psi$ follows from the fact that  $\one_A\cdot\one_B = \one_{A \cap B}\in V_\A$ whenever $A,B \in \A$ are  distinct.  By definition, the image of~$\psi$ is
\[ \operatorname{span}\{ \psi(e_A) : A\in \mc{A}\} = \operatorname{span}\{ \one_A + W_{\mc{A}} : A\in \mc{A}\}, \]
and this subspace is dense in~$X_{\mc{A}}/W_{\mc{A}}$ because 
 the definitions~\eqref{YAdefn} and~\eqref{mainthm:eq1} of~$X_{\mc{A}}$ and~$V_\A$ imply that the subspace spanned by~$\{ \one_A : A \in \mc{A}\}\cup V_\A$ is dense in~$X_{\mc{A}}$.

 It remains to show that~$\psi$ is an isometry. To this end, take distinct sets $A_1,\ldots,A_m\in\mc{A}$ for some $m\in\N$, define $A_1^d,\ldots,A_m^d$ by~\eqref{CombLemma:eq1},  and let $\sigma_1,\ldots,\sigma_m\in\mathbb{K}$. Then we have
 \[ \psi\Bigl(\sum_{j=1}^m \sigma_j e_{A_j}\Bigr) =  \sum_{j=1}^m \sigma_j \one_{A_j} + W_{\mc{A}} = \sum_{j=1}^m \sigma_j \one_{A^d_j} + W_{\mc{A}}  \]
because \Cref{CombLemma} implies that
\begin{align*}
	\sum_{j=1}^m \sigma_j \one_{A_j} - \sum_{j=1}^m \sigma_j \one_{A^d_j} = \underset{\lvert N\rvert\geqslant 2}{\sum_{N\subseteq\{1,\ldots,m\}}}(-1)^{\lvert N\rvert}\Bigl(\sum_{j\in N}\sigma_j\Bigr)\one_{\bigcap_{j\in N}A_j} \in V_\A \subseteq W_{\mc{A}}.
\end{align*}
Therefore, showing that~$\psi$ is an isometry is equivalent to showing that
\begin{equation}\label{L:c0A:eq} \Bigl\|\sum_{j=1}^m \sigma_j \one_{A^d_j} + W_{\mc{A}}\Bigr\| = \max_{1\le j\le m}\lvert\sigma_j\rvert. 
\end{equation}
The inequality~$\le$ is clear because  the sets $A_1^d,\ldots,A_m^d$ are disjoint. 

To verify the opposite inequality, 
choose $i\in\{1,\ldots,m\}$ such that $\lvert\sigma_{i}\rvert = \max_{1\le j\le m}\lvert\sigma_j\rvert$. For each  $v\in V_\A$, \Cref{L:suppW0} shows that $\supp v\in [\Gamma]^{<\Gamma}$. Consequently, since  $\lvert A_i\rvert=\Gamma$  and $\bigcup_{k=1,\,k\ne i}^m A_k\cap A_i\in[\Gamma]^{<\Gamma}$, we can find  $\gamma\in A_{i}^d$ such that $v(\gamma) =0$. By disjointness, we have $\gamma \notin A_j^d$ for each $j \in \{1, \ldots, m\}\setminus\{i\}$, and therefore
\begin{align*}
	\Bigl\|\sum_{j=1}^m \sigma_j \one_{A_j^d} - v\Bigr\|_{\infty} &\geqslant \Bigl|\sum_{j=1}^m \sigma_j \one_{A_j^d}(\gamma) - v(\gamma) \Bigr| = \lvert\sigma_{i}\rvert.
\end{align*}
This implies that $\bigl\|\sum_{j=1}^m \sigma_j \one_{A^d_j} + W_{\mc{A}}\bigr\|\ge \lvert\sigma_{i}\rvert$ because~$v\in V_\A$ was arbitrary and~$V_\A$ is dense in~$W_{\mc{A}}$; hence~\eqref{L:c0A:eq} follows.
\end{proof}
\begin{Rem} In the case of complex scalars, the map~$\psi$ given by~\eqref{L:c0A:eq0} is a $*$\nobreakdash-iso\-mor\-phism.
\end{Rem}

\begin{Lem}\label{L:zero_set}
Let $U\colon E\to X^*$ be a bounded operator, where $E = c_0(J)$ or $E=\ell_p(J)$ for some infinite set~$J$ and some $p\in (1, \infty)$, and~$X$ is a non-zero Banach space. Then the set $\{j \in J : U e_j \neq 0\}$ has  cardinality no greater than the density character of~$X$, where $(e_j)_{j\in J}$ denotes the unit vector basis for~$E$.
\end{Lem}

\begin{proof} 
  We begin by showing that the set $J_{n, x} = \{j \in J : \lvert\langle U e_j, x\rangle\rvert\geqslant 1/n \}$ is finite for each $n \in \N$ and $x \in X$, where     $\langle\,\cdot\,,\,\cdot\,\rangle$ denotes the duality bracket between~$X$ and its dual space~$X^*$. Assume the contrary, and take $n \in \N$ and $x \in X$ such that~$J_{n, x}$ contains an infinite sequence $(j_k)_{k \in \N}$ of distinct elements.  For each $k \in \N$, choose a scalar~$\sigma_k$ of modulus one such that $\sigma_k \langle U e_{j_k},x\rangle\geqslant 1/n$. Then 
  $y = \sum_{k=1}^{\infty} ({\sigma_k}/{k}) e_{j_k}$ defines an element of~$E$ (recall that we
  do not allow $E = \ell_1(J)$). However, we have
\begin{align*}
  \lvert\langle Uy,x\rangle\rvert  &= \Bigl|\sum_{k=1}^{\infty} \frac{\sigma_k}{k} \langle U e_{j_k},x\rangle\Bigr| \geqslant \frac{1}{n} \sum_{k=1}^{\infty} \frac{1}{k} = \infty,
\end{align*}
which is absurd. This contradiction proves that~$J_{n,x}$ is finite for each $n\in\N$ and \mbox{$x\in X$}. It follows that the set $J_x = \bigcup_{n \in \N} J_{n, x} = \{j \in J : \langle U e_j, x\rangle\ne 0\}$ is countable for each~\mbox{$x\in X$.}

Now take a dense subset~$D$ of $X$. Then $\bigcup_{x\in D} J_{x} = \{j \in J : U e_j \neq 0\}$, and therefore $\lvert\{j \in J :  U e_j \neq 0\}\rvert\leqslant \lvert D\rvert$ because~$D$ is infinite (otherwise it could not be dense in a non-zero Banach space) and the sets~$J_x$ are countable.
\end{proof}

\begin{proof}[Proof of Theorem~{\normalfont{\ref{mainthm}}}.] Suppose that  $T\colon X_{\mc{A}}\to\ell_\infty(\Gamma)$ is a bounded operator for which $T\one_A=0$ for every $A\in\A_\doublecap$,  and define $\mathcal{B} = \{A\in \mathcal{A} : T \one_{A} = 0\}$. Then trivially $T\one_B=0$ for every $B\in\mathcal{B}$, so the conclusion will follow provided that we can show that $\lvert \mc{A}\setminus\mc{B}\rvert\le \Gamma$.

  Since $W_\A\subseteq\ker T$ by hypothesis,  
the Fundamental Isomorphism Theorem implies that we can define a bounded operator $\widetilde{T}\colon X_{\mc{A}}/ W_{\mc{A}}\to \ell_{\infty}(\Gamma)$ by $\widetilde{T}(x + W_{\mc{A}}) = Tx$.
Com\-posing it with  the isometric isomorphism $\psi \colon c_0(\mc{A}) \to X_{\mc{A}} / W_{\mc{A}}$ from \Cref{L:c0A}, we obtain an opera\-tor $U = \widetilde{T}\circ\psi\colon c_0(\A) \to \ell_{\infty}(\Gamma)$ which satisfies $ Ue_{A} =  T \one_{A}$ for every $A\in\mathcal{A}$, and there\-fore 
\begin{equation*}
  \{A \in \mc{A} :  Ue_{A} \neq 0\} = \{A \in \mc{A} : T \one_{A} \neq 0\} = \mc{A}\setminus\mc{B}.
\end{equation*}
\Cref{L:zero_set} shows that  the set on the left-hand side of this equation
  has cardinality at most~$\Gamma$ because  $\ell_\infty(\Gamma)\cong\ell_1(\Gamma)^*$ and~$\ell_1(\Gamma)$ has density character~$\Gamma$. The result follows.
\end{proof}

\begin{proof}[Proof of Corollary~{\normalfont{\ref{corellinty}}}]
  As previously mentioned, Sierpi\'{n}ski~\cite{Sier} has shown that~$[\Gamma]^\Gamma$ contains an almost disjoint family $\mc{A}$ of cardinality greater than~$\Gamma$. The almost disjointness of~$\A$ means that $\one_A\in\ell_\infty^<(\Gamma)$ for every $A\in\A_\doublecap$. Hence, given a bounded operator $T\colon\ell_\infty(\Gamma)\to\ell_\infty(\Gamma)$ with $\ell_{\infty}^{<}(\Gamma)\subseteq\ker T$, we can apply \Cref{mainthm} to the restriction of~$T$ to~$X_{\mc{A}}$ to obtain  a subset~$\mc{B}$
  of~$\mc{A}$ such that $\lvert\mc{A}\setminus\mc{B}\rvert\le\Gamma$ and $X_{\mc{B}}\subseteq\ker T$. We must have $\lvert \mc{B}\rvert>\Gamma$ because $\Gamma<\lvert\mc{A}\rvert = \max\{\lvert \mc{B}\rvert, \lvert \mc{A}\setminus\mc{B}\rvert\}$. This establishes the first part of the result. The second part follows because $\one_B\notin\ell_{\infty}^{<}(\Gamma)$ for every $B\in[\Gamma]^\Gamma$.
\end{proof} 

We conclude this section with a question that we are grateful to the referee for drawing our attention to. For any filter~$\EuScript{F}$ on~$\Gamma$, the set 
\begin{align*}
	c_{\EuScript{F}}(\Gamma) = \bigl\{ x \in \ell_{\infty}(\Gamma) : \lim\limits_{\gamma \to \EuScript{F}} x(\gamma) = 0 \bigr\}
\end{align*}
defines a closed subspace of~$\ell_\infty(\Gamma)$. In the case where $\EuScript{F} = \{ A\subseteq\Gamma : \Gamma \setminus A \in [\Gamma]^{<\Gamma}\}$, 
we have $c_{\EuScript{F}}(\Gamma) = \ell_{\infty}^{<}(\Gamma)$. In view of Corollary~\ref{corellinty}, this raises the following question:

\begin{Que}
	For which filters~$\EuScript{F}$ on~$\Gamma$ is it possible to realise the subspace $c_{\EuScript{F}}(\Gamma)$ of~$\ell_\infty(\Gamma)$ as the kernel of a bounded operator $\ell_\infty(\Gamma)\to\ell_\infty(\Gamma)$?
\end{Que}

We note that  $c_{\EuScript{F}}(\Gamma) = c_0(\Gamma)$ if~$\EuScript{F}$ is the Fr\'{e}chet filter consisting of all cofinite subsets of~$\Gamma$. Let us also remark that in the case where $\EuScript{F} = \{ A\subseteq\Gamma : \alpha\in A\}$ is  the principal ultra\-filter determined by some~$\alpha\in\Gamma$, $c_{\EuScript{F}}(\Gamma)$ is the kernel of a bounded operator, namely the rank-one operator $x\otimes\delta_\alpha\colon \ell_\infty(\Gamma)\to\ell_\infty(\Gamma)$ given by $y\mapsto  y(\alpha)x$, where~$x$ can be any non-zero element of~$\ell_\infty(\Gamma)$.

\section{Kernels of operators on $\ell_{\infty}^{c}(\Gamma)$}\label{S:CountablySupp}
\noindent
A natural variant of the Banach space~$\ell_{\infty}(\Gamma)$ and its subspace~$\ell_{\infty}^{<}(\Gamma)$ defined by~\eqref{eq:subspace} is 
\begin{align*}
	\ell_{\infty}^c(\Gamma) = \{x \in \ell_{\infty}(\Gamma) : \supp x\ \text{is countable}\}.
\end{align*}
This is readily seen to be a closed subspace of~$\ell_{\infty}(\Gamma)$ and hence a Banach space in its own right. Since $\ell_\infty^c(\Gamma)=\ell_\infty(\Gamma)$ when~$\Gamma$ is countable, one can view~$\ell_\infty^c(\Gamma)$ as a ``smaller'' generalisation of~$\ell_\infty$ to uncountable index sets~$\Gamma$ than~$\ell_\infty(\Gamma)$ itself.

From a certain perspective, this  generalisation is nicer than~$\ell_{\infty}(\Gamma)$. Indeed, Johnson, Kania and Schecht\-man \cite[Theorem~1.4]{JKSch} have given a complete classification of the com\-ple\-mented subspaces of~$\ell_\infty^c(\Gamma)$, something that has been achieved for~$\ell_\infty(\Gamma)$ only when~$\Gamma$ is countable. 
However, $\ell_{\infty}^c(\Gamma)$ does not enjoy the same universal properties as $\ell_{\infty}(\Gamma)$ when~$\Gamma$ is uncountable because~$\ell_{\infty}(\Gamma)$ is  injective, whereas~$\ell_\infty^c(\Gamma)$ is not due to a result of  Pe\l{}\-czy\'{n}\-ski and Su\-da\-kov \cite[Corollary, page~87]{PS}, which states that~$\ell_\infty^c(\Gamma)$ is not complemented in~$\ell_\infty(\Gamma)$. We remark that~$\ell_\infty^c(\Gamma)$ has the weaker property of being separably injective (see \cite[Definition~2.1~and~Example~2.4]{Avilesetal} for details).\smallskip

This section is motivated by the following question, which asks whether a natural generalisation of Kalton's result for~$\ell_\infty$ holds true for~$\ell_\infty^c(\Gamma)$.

\begin{Que}\label{question2}  Let $\Gamma$ be an uncountable cardinal number, and 
suppose that  $T\colon \ell_\infty^c(\Gamma)\to\ell_\infty^c(\Gamma)$ is a bounded operator with $c_0(\Gamma)\subseteq \ker T$. Does $\ker T$ contain the subspace 
\[ \ell_{\infty}^c(A) = \{ x\in\ell_{\infty}^c(\Gamma) : \supp x\subseteq A\} \]
for some $A \in [\Gamma]^{\Gamma}$?
\end{Que}

This question  turns out to be undecidable in \textsf{ZFC} for $\Gamma=\omega_1$, as we already stated in \Cref{S:intro}. In one direction, this depends on a deep result of Baum\-gartner \cite[state\-ment~(1) on page~403]{Baum}, who showed that the following statement is independent of  \textsf{ZFC} and the negation of the Continuum Hypothesis:
\begin{equation}\label{eq:Baum}
  \begin{array}{l} \text{There exists a family}\ \mathcal{A} \subset [\omega_1]^{\omega_1}\ \text{of cardinality}\ \omega_2\ \text{such}\\ \text{that}\  A \cap B\ \text{is finite whenever}\ A, B \in \A\ \text{are distinct.}
\end{array}    
  \end{equation}
  
\begin{Thm}\label{t:gch_kernel} Assume Baumgartner's statement~\eqref{eq:Baum}, and let $T \colon \ell_{\infty}^c(\omega_1) \to \ell_{\infty}(\omega_1)$ be a bounded operator whose kernel contains $c_0(\omega_1)$. Then $\ell_{\infty}^c(A)\subseteq \ker T$ for some $A \in [\omega_1]^{\omega_1}$. 
\end{Thm}

\begin{proof}
  Take a family $\mathcal{A}\subset [\omega_1]^{\omega_1}$ of cardinality~$\omega_2$ such that $A \cap B$ is finite whenever \mbox{$A, B \in \mc{A}$} are distinct, and  assume towards a contradiction that  no set $A \in \mc{A}$ satisfies $\ell_{\infty}^c(A)\subseteq \ker T$. Then, for every $A\in\mathcal{A}$, we can pick a unit vector $x_A\in\ell_{\infty}^c(A)$ such that $Tx_A \neq 0$.
  Set $\mc{A}_{\gamma,n} = \{ A \in \A : \lvert(Tx_A)(\gamma)\rvert\geqslant 1/n\}$ for each $n \in \N$ and $\gamma\in\omega_1$. Since \[ \bigcup_{\gamma\in\omega_1,\,n \in \N} \mc{A}_{\gamma, n} = \{A \in \mc{A} : T x_A \neq 0 \} = \mc{A}, \] which has cardinality~$\omega_2$, we must have $\lvert\mc{A}_{\gamma_0, n_0}\rvert = \omega_2$ for some $n_0 \in \N$ and $\gamma_0\in\omega_1$.
  In fact, it will suffice for our purposes to know that~$\mc{A}_{\gamma_0, n_0}$ contains~$m$ distinct sets $A_1,\ldots,A_m$ for some integer  $m>n_0 \|T\|$.

  For each $i \in \{1, \ldots, m\}$, choose $\sigma_i\in\mathbb{K}$ with $\lvert\sigma_i\rvert = 1$ such that $\sigma_i (T x_{A_i})(\gamma_0) \geqslant 1/n_0$, and set $x = \sum_{i=1}^{m} \sigma_i x_{A_i}\in\ell_\infty^c(\omega_1)$. We shall express this vector as $x=y+z$, 
where $\lVert y\rVert_\infty\leqslant 1$ and $Tz=0$. To this end,  set
  \[ B = \bigcup_{1\leqslant i< j \leqslant m} A_i \cap A_j, \]
which is a finite subset of~$\omega_1$ by the hypothesis on~$\mathcal{A}$. It follows that $z= x\cdot\one_B$ has finite support, so $z\in c_0(\omega_1)\subseteq\ker T$. Furthermore, we have $y = x-z = \sum_{i=1}^{m} \sigma_i x_{A_i}\cdot \one_{A_i\setminus B}$, where the terms  $\sigma_ix_{A_i}\cdot \one_{A_i\setminus B}$ are disjointly supported by the choice of~$B$, and they have norm at most~$1$ by the choices of~$x_{A_i}$ and~$\sigma_i$. This implies that $\lVert y\rVert_\infty\leqslant 1$, and therefore
\[ \lVert T\rVert  \geqslant \lVert Ty\rVert_\infty = \lVert Ty + Tz\rVert_\infty = \lVert Tx\rVert_\infty\geqslant \lvert (Tx)(\gamma_0)\rvert = \Bigl|\sum_{i =1}^m \sigma_i (Tx_{A_i})(\gamma_0) \Bigr|\geqslant \frac{m}{n_0} > \lVert T\rVert, \]
which is obviously absurd. The result follows.
\end{proof}

To prove that it is also consistent with \textsf{ZFC} that the answer to \Cref{question2} is negative, we require a set-theoretic axiom known as~$\clubsuit$, which is independent of \textsf{ZFC}. It is defined as follows (see, \emph{e.g.,} \cite[Definition~4.35]{Hajeketal}). 
\begin{Def}\label{D:clubsuit}
Let~$(\lambda_\alpha)_{\alpha\in\omega_1}$ denote the increasing enumeration of the countably infinite limit ordinals. Then~$\clubsuit$ is the statement  that there exists a transfinite sequence $(B_\alpha)_{\alpha\in\omega_1}$ such that:
\begin{enumerate}[label={\normalfont{(\roman*)}}]
\item\label{clubsuit:item1} for each $\alpha\in\omega_1$, $B_\alpha = \{ \beta_\alpha^{(k)} : k\in\N\}$, where $(\beta_\alpha^{(k)})_{k\in\N}$ is a strictly increasing sequence of ordinal numbers converging to~$\lambda_\alpha;$
\item\label{clubsuit:item2} every uncountable subset of~$\omega_1$ contains~$B_\alpha$ for some $\alpha\in\omega_1$.
\end{enumerate}
\end{Def}

\begin{Thm}\label{t:clubsuit_kernel} Assume~$\clubsuit$. Then a bounded operator $T \colon \ell_{\infty}^c(\omega_1) \to \ell_{\infty}^c(\omega_1)$ exists for which $c_0(\omega_1)\subseteq\ker T$, but  $\ell_\infty^c(A)\nsubseteq\ker T$ for every $A \in [\omega_1]^{\omega_1}$. 
\end{Thm}

\begin{proof} Fix a free ultrafilter~$\EuScript{U}$ on~$\N$, take  a transfinite sequence~$(B_\alpha)_{\alpha\in\omega_1}$ as in \Cref{D:clubsuit},  and let $\alpha\in\omega_1$.  Writing $B_\alpha = \{ \beta_\alpha^{(k)} : k\in\N\}$ as in \Cref{D:clubsuit}\ref{clubsuit:item1}, we can define a  bounded linear functional $\varphi_{\alpha} \colon \ell_{\infty}^c(\omega_1) \to \mathbb{K}$ of norm~$1$ by
  \[ \varphi_\alpha(x) = \lim_{k \to \EuScript{U}} x(\beta_\alpha^{(k)}). \] 
  We note that $\varphi_{\alpha}(\one_{B_{\alpha}}) = 1$ because $\mathbbm{1}_{B_{\alpha}}(\beta_{\alpha}^{(k)}) =1$ for each~$k \in \mathbb{N}$. Furthermore, the fact that $\varphi_{\alpha}(\one_{\{\gamma\}}) = 0$ for each $\gamma\in\omega_1$ implies that
  \[ \ker\varphi_\alpha\supseteq\overline{\operatorname{span}}\,\{\one_{\{\gamma\}} : \gamma\in\omega_1\} = c_0(\omega_1). \]
Hence we can define a bounded operator $T \colon \ell_{\infty}^c(\omega_1) \to \ell_{\infty}(\omega_1)$ of norm~$1$ with $c_0(\omega_1) \subseteq \ker T$ by
  \[ (Tx)(\alpha) = \varphi_{\alpha}(x)\qquad (x\in\ell_\infty^c(\omega_1),\,\alpha\in\omega_1). \]

To see that the range of~$T$ is contained in~$\ell_\infty^c(\omega_1)$, let $x\in\ell_\infty^c(\omega_1)$, and take $\alpha_0\in\omega_1$ such that $\operatorname{supp} x\subseteq [0,\lambda_{\alpha_0}]$. Then, for every countable ordinal number $\alpha>\alpha_0$, we have
\[ \lambda_{\alpha_0} < \lambda_\alpha = \lim_{k\to\infty}\beta_\alpha^{(k)}, \]
so there are at most fi\-nite\-ly many $k\in\N$ such that $\beta_\alpha^{(k)}\le\lambda_{\alpha_0}$, and therefore at most fi\-nite\-ly many  $k\in\N$ such that $x(\beta_\alpha^{(k)})\ne 0$. Hence $\varphi_\alpha(x) =0$ whenever $\alpha>\alpha_0$.  This shows that $\operatorname{supp} Tx\subseteq [0,\alpha_0]$, and consequently $Tx\in\ell_\infty^c(\omega_1)$.
  
Finally,  we verify that    $\ell_\infty^c(A)\nsubseteq\ker T$ for every $A\in[\omega_1]^{\omega_1}$.   \Cref{D:clubsuit}\ref{clubsuit:item2} implies that we can find $\alpha\in\omega_1$ such that $B_\alpha\subseteq A$. Then $\one_{B_\alpha}\in \ell_\infty^c(A)$ because $B_\alpha$ is countable, and we have $(T\one_{B_\alpha})(\alpha) = \varphi_{\alpha}(\one_{B_{\alpha}})= 1$, so  $\one_{B_\alpha}\notin\ker T$. The conclusion follows.
\end{proof}

Combining Theorems~\ref{t:gch_kernel} and~\ref{t:clubsuit_kernel}, we see that the statement~\eqref{ZFC:indep} is independent of \textsf{ZFC}, as previously claimed.\smallskip

This raises the question: within \textsf{ZFC}, what can we say about kernels of bounded operators on~$\ell_\infty^c(\Gamma)$? To address it, we require the following piece of terminology: a bounded operator $T\colon X\to Y$ between Banach spaces~$X$ and~$Y$ \emph{fixes a copy} of a  Banach space~$Z$ if there is a bounded operator $U\colon Z\to X$ such that $TU$ is an isomorphic embedding. 
\begin{Prop}\label{p:countably_supp_ideal}  
  Let~$\Gamma$ be a cardinal number with uncountable cofinality, let~$X$ be a Banach space, and suppose that $T\colon\ell_{\infty}^{c}(\Gamma)\to X$ is a bounded operator which does not fix a copy of~$c_0(\Gamma)$. Then $\ell_{\infty}^c(A)\subseteq\ker T$ for some $A \in [\Gamma]^{\Gamma}$.
\end{Prop}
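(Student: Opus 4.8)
The plan is to argue by contradiction, reducing everything to the existence of a single \emph{long} copy of $c_0(\Gamma)$ inside $\ell_\infty^c(\Gamma)$ that $T$ would be forced to fix. First I would record the elementary reduction that it suffices to find $\Lambda\in[\Gamma]^\Gamma$ with $T1_D=0$ for \emph{every} countable $D\subseteq\Lambda$. Indeed, any $x\in\ell_\infty^c(\Lambda)$ has countable support $C\subseteq\Lambda$, so $x$ lies in the isometric copy of $\ell_\infty$ provided by $\ell_\infty(C)$, on which the indicators $1_D$ ($D\subseteq C$) have dense linear span; hence $T1_D=0$ for all countable $D\subseteq\Lambda$ forces $Tx=0$, i.e.\ $\ell_\infty^c(\Lambda)\subseteq\ker T$. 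Assuming no such $\Lambda$ exists, the negation reads: for \emph{every} $\Lambda\in[\Gamma]^\Gamma$ there is a countable $D\subseteq\Lambda$ with $T1_D\ne0$.

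Next I would build, by transfinite recursion of length $\Gamma$, pairwise disjoint countable sets $D_\beta\subseteq\Gamma$ with $T1_{D_\beta}\ne0$: at stage $\beta<\Gamma$ the union $\bigcup_{\beta'<\beta}D_{\beta'}$ has cardinality $\le\lvert\beta\rvert\cdot\aleph_0<\Gamma$ (here only the uncountability of $\Gamma$ is used), so its complement still lies in $[\Gamma]^\Gamma$ and the negation supplies a suitable $D_\beta$. Since $\Gamma$ has uncountable cofinality, a countable union of sets of size $<\Gamma$ again has size $<\Gamma$; applied to the sets $\{\beta:\lVert T1_{D_\beta}\rVert\ge 1/n\}$ this shows that for some $n_0$ the set of $\beta$ with $\lVert T1_{D_\beta}\rVert\ge\delta:=1/n_0$ has cardinality $\Gamma$. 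Relabelling, I obtain disjointly supported norm-one vectors $u_\beta:=1_{D_\beta}$ ($\beta<\Gamma$) with $\lVert Tu_\beta\rVert\ge\delta$. By disjointness of supports $\delta_\beta\mapsto u_\beta$ extends to an isometric embedding $U\colon c_0(\Gamma)\to\ell_\infty^c(\Gamma)$, and $V:=TU\colon c_0(\Gamma)\to X$ satisfies $\lVert V\delta_\beta\rVert\ge\delta$ while $\lVert V(\sum_\beta a_\beta\delta_\beta)\rVert\le\lVert V\rVert\max_\beta\lvert a_\beta\rvert$ automatically.

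The heart of the matter, and the step I expect to be the main obstacle, is the following \emph{long $c_0$} lemma: if $\Gamma$ has uncountable cofinality and $V\colon c_0(\Gamma)\to X$ is bounded with $\inf_\beta\lVert V\delta_\beta\rVert>0$, then there is $S\in[\Gamma]^\Gamma$ for which $(V\delta_\beta)_{\beta\in S}$ is equivalent to the unit vector basis of $c_0(S)$. To prove it I would write $v_\beta=V\delta_\beta$ and choose norming functionals $f_\beta\in X^*$, $\lVert f_\beta\rVert\le1$, with $f_\beta(v_\beta)=\lVert v_\beta\rVert\ge\delta$. Testing $f_\beta$ against finite signed sums $\sum\varepsilon_{\beta'}v_{\beta'}=V(\sum\varepsilon_{\beta'}\delta_{\beta'})$ gives $\sum_{\beta'}\lvert f_\beta(v_{\beta'})\rvert\le\lVert V\rVert$ for each $\beta$; so each ``row'' is absolutely summable, $\{\beta':f_\beta(v_{\beta'})\ne0\}$ is countable, and I may pick a \emph{finite} set $G_\beta$ (with $\beta\notin G_\beta$) carrying all but $\delta/2$ of the row's $\ell_1$-mass. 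The assignment $\beta\mapsto G_\beta\in[\Gamma]^{<\omega}$ is a set mapping of finite order, and since $\aleph_0<\operatorname{cf}(\Gamma)$, Hajnal's free-set theorem yields a free set $S\in[\Gamma]^\Gamma$, i.e.\ $\beta'\notin G_\beta$ for all distinct $\beta,\beta'\in S$. For such $S$ every off-diagonal row sum $\sum_{\beta'\in S\setminus\{\beta\}}\lvert f_\beta(v_{\beta'})\rvert$ is at most $\delta/2$, so applying $f_{\beta_0}$ (with $\lvert a_{\beta_0}\rvert$ maximal) to $\sum_{\beta\in S}a_\beta v_\beta$ yields the lower estimate $\lVert\sum a_\beta v_\beta\rVert\ge(\delta/2)\max_\beta\lvert a_\beta\rvert$; together with the automatic upper estimate this is the claimed equivalence.

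Finally I would combine the pieces: applying the lemma to $V=TU$ produces $S\in[\Gamma]^\Gamma$ on which $V|_{c_0(S)}=T\bigl(U|_{c_0(S)}\bigr)$ is an isomorphic embedding, where $U|_{c_0(S)}\colon c_0(S)\cong c_0(\Gamma)\to\ell_\infty^c(\Gamma)$ is bounded. This exhibits $T$ as fixing a copy of $c_0(\Gamma)$, contradicting the hypothesis and completing the proof. The two points where uncountable cofinality is genuinely needed are the pigeonhole giving a \emph{uniform} lower bound $\delta$ and the hypothesis $\aleph_0<\operatorname{cf}(\Gamma)$ of the free-set theorem; extracting the near-diagonal subfamily via Hajnal's theorem is, I expect, the technically most delicate step, since a naive greedy recursion fails because a single index may lie in the sets $G_\beta$ of $\Gamma$-many $\beta$.
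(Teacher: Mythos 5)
Your proof is correct, and its overall skeleton coincides with the paper's: both argue the contrapositive, produce $\Gamma$-many disjointly supported unit vectors on which $T$ does not vanish (the paper simply fixes a disjoint family $(\Lambda_\gamma)_{\gamma\in\Gamma}$ in $[\Gamma]^\Gamma$ and picks a unit vector $x_\gamma\in\ell_\infty^c(\Lambda_\gamma)\setminus\ker T$, making your reduction to indicators $1_D$ and the transfinite recursion an unnecessary, though harmless, detour), and both use the uncountable cofinality of $\Gamma$ for exactly the same pigeonhole argument yielding a uniform lower bound $\delta>0$ on a subfamily of full cardinality. Where you genuinely diverge is the final extraction step: the paper disposes of it in one line by citing Rosenthal's theorem \cite[Remark~1 (page~30)]{Ros}, which states that a bounded operator from $c_0(\Gamma)$ that is bounded below on the unit vectors of a set of indices of cardinality $\Gamma$ restricts to an isomorphic embedding on $c_0(\Gamma')$ for some $\Gamma'$ of full cardinality, whereas you prove this ``long $c_0$'' lemma from scratch via norming functionals $f_\beta$, the row estimate $\sum_{\beta'}\lvert f_\beta(v_{\beta'})\rvert\le\lVert V\rVert$, finite exceptional sets $G_\beta$, and Hajnal's free-set theorem; the resulting lower bound $(\delta/2)\max_\beta\lvert a_\beta\rvert$ on the free set is correct, and your observation that a naive greedy recursion fails is apt. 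Your approach buys self-containedness at the cost of importing a nontrivial combinatorial theorem; note also two small inaccuracies that do not affect validity: Hajnal's theorem for set mappings of order $\aleph_0$ requires only $\aleph_0<\Gamma$, not $\aleph_0<\operatorname{cf}(\Gamma)$ (uncountable cofinality is genuinely needed only in the pigeonhole step, as in the paper), and Rosenthal's cited result likewise holds without any cofinality assumption, so the ``two points where uncountable cofinality is genuinely needed'' are in fact one.
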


\begin{proof}
  We shall  prove the contrapositive statement, so  suppose that $\ell_{\infty}^c(A) \nsubseteq \ker T$ for every \mbox{$A \in [\Gamma]^{\Gamma}$}.
  Since~$\Gamma$ is infinite, we can take a family $(A_{\gamma})_{\gamma\in\Gamma}$ of mutually disjoint sets in~$[\Gamma]^\Gamma$.   
The hypothesis implies that we can choose a unit vector $x_{\gamma}\in \ell_{\infty}^{c}(A_\gamma)\setminus\ker T$ for each $\gamma\in\Gamma$.   Then  the transfinite sequence $(x_{\gamma})_{\gamma\in\Gamma}$ consists of disjointly supported unit vectors in~$\ell_{\infty}^{c}(\Gamma)$, so we can define a linear isometry $S\colon c_0(\Gamma)\to\ell_\infty^c(\Gamma)$ by setting $Se_{\gamma} = x_\gamma$ for each $\gamma\in\Gamma$. 

Combining the fact that~$\Gamma$ has uncountable cofinality with the observation that
\[ \Gamma = \bigcup_{n\in\N} \Gamma_n,\quad\text{where}\quad \Gamma_n =  \Bigl\{\gamma\in\Gamma : \|Tx_{\gamma} \|_{\infty} \geqslant \frac1n\Bigr \}, \]
we conclude that $\lvert\Gamma_n\rvert = \Gamma$ for some $n\in\N$. Then
$\inf_{\gamma \in\Gamma_n}\lVert TSe_{\gamma} \rVert_{\infty}\ge 1/n>0$, so a famous result of Rosenthal stated in \cite[Remark~1 (page~30)]{Ros} implies that we can find $\Gamma'\in[\Gamma_n]^\Gamma$ such that the restriction of~$TS$ to $c_0(\Gamma')\subseteq c_0(\Gamma)$ is an isomorphic embedding. This shows that~$T$ fixes a copy of~$c_0(\Gamma)$.
\end{proof}

\Cref{p:countably_supp_ideal} should be compared with the  following result of W\'{o}jtowicz \cite[Theorem~1]{Woj}.

\begin{Thm}[W\'{o}jtowicz]
  Let $T\colon\ell_{\infty}(\Gamma)/c_0(\Gamma)\to X$ be a bounded operator, where~$\Gamma$ is infinite and~$X$ is a Banach space. If $T$ does not fix a copy of~$\ell_{\infty}$, then~$\ker T$ contains a closed subspace which is isomorphic to~$c_0$.
\end{Thm}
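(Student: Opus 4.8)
The plan is to encode the hypothesis as a property of an $X$-valued finitely additive measure and then build the desired copy of~$c_0$ out of \emph{exactly} annihilated sets. Write $q\colon\ell_\infty(\Gamma)\to\ell_\infty(\Gamma)/c_0(\Gamma)$ for the quotient map, fix a countably infinite subset $\N_0\subseteq\Gamma$, and define $\mu(A)=Tq(1_A)\in X$ for $A\subseteq\N_0$. Then $\mu$ is finitely additive, $\lVert\mu(A)\rVert\le\lVert T\rVert$, and $\mu(F)=0$ for every finite~$F$, because $1_F\in c_0(\Gamma)\subseteq\ker(Tq)$. I would first record the elementary fact that, for any sequence $(A_n)$ of pairwise disjoint infinite subsets of~$\N_0$, the assignment $\delta_n\mapsto q(1_{A_n})$ extends to an isometric copy $j\colon\ell_\infty\to\ell_\infty(\Gamma)/c_0(\Gamma)$ (the distance of $\sum_n a_n 1_{A_n}$ to $c_0(\Gamma)$ equals $\lVert(a_n)\rVert_\infty$). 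This reduces the theorem to the following concrete goal: produce pairwise disjoint infinite sets $G_1,G_2,\dots\subseteq\N_0$ with $\mu(G_k)=0$ for all~$k$, since then the vectors $q(1_{G_k})$ lie in $\ker T$ and, being disjointly supported indicators of infinite sets, span an isometric copy of~$c_0$.

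The first substantial step is to show that $\mu$ is \emph{exhaustive}, i.e.\ $\mu(A_n)\to 0$ for every disjoint sequence $(A_n)$ of infinite subsets of~$\N_0$. If this failed, then after passing to a subsequence we would have $\inf_n\lVert\mu(A_n)\rVert>0$, so the operator $Tj\colon\ell_\infty\to X$ (with $j$ as above) would satisfy $\inf_n\lVert Tj\delta_n\rVert>0$. Rosenthal's theorem --- in the same circle of ideas as the result of~\cite{Ros} invoked above, but in its $\ell_\infty$-fixing form --- then yields an infinite $M\subseteq\N$ such that $Tj|_{\ell_\infty(M)}$ is an isomorphic embedding; composing with~$j$ shows that~$T$ fixes a copy of~$\ell_\infty$, contradicting the hypothesis. (Equivalently, $\inf_n\lVert Tj\delta_n\rVert>0$ forces $Tj$ to be non-weakly-compact, hence $\ell_\infty$-fixing.)

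The main obstacle is the passage from \emph{small} to \emph{zero}: exhaustiveness only provides disjoint infinite sets on which $\lVert\mu\rVert$ is arbitrarily small, and a copy of~$c_0$ assembled from such sets need not lie in $\ker T$ --- indeed the restriction of~$T$ to such a copy could be an injective (even compact) operator with trivial kernel. To overcome this I would \emph{not} fix one disjoint family, but instead work with the control submeasure $\eta(A)=\sup_{B\subseteq A}\lVert\mu(B)\rVert$ defined for $A\subseteq\N_0$. One checks readily that $\eta$ is monotone and subadditive, vanishes on finite sets, and --- crucially --- inherits exhaustiveness from~$\mu$ (otherwise disjoint sets carrying $\eta$-mass bounded below would contain disjoint sets carrying $\mu$-mass bounded below). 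Since $\eta$ is exhaustive, partitioning any infinite set into infinitely many infinite pieces shows that every infinite set contains an infinite subset of arbitrarily small $\eta$-value. I would use this to construct a decreasing chain of infinite sets $B_1\supseteq B_2\supseteq\cdots$ with $\eta(B_k)<\frac1k$, pass to a pseudo-intersection $B_0$ (an infinite set with $B_0\setminus B_k$ finite for each~$k$), and conclude from subadditivity together with $\eta(\text{finite})=0$ that $\eta(B_0)\le\eta(B_k)<\frac1k$ for all~$k$, whence $\eta(B_0)=0$.

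Finally, $\eta(B_0)=0$ means $\mu(B)=0$ for \emph{every} $B\subseteq B_0$; splitting the infinite set~$B_0$ into infinitely many pairwise disjoint infinite pieces $G_1,G_2,\dots$ therefore yields the sets required in the concrete goal above, and the copy of~$c_0$ inside $\ker T$ drops out. I expect the two genuine difficulties to be, first, locating the correct ($\ell_\infty$-fixing) form of Rosenthal's theorem for the exhaustiveness step, and second --- the real heart of the argument --- the realisation that exact annihilation must be extracted from the control submeasure $\eta$ via a pseudo-intersection, rather than from any single disjoint family.
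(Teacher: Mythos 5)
The paper itself does not prove this statement: it is quoted from Wojt\'{o}wicz \cite[Theorem~1]{Woj} purely as a point of comparison with \Cref{p:countably_supp_ideal}, so there is no internal proof to compare you against. Judged on its own merits, your argument is correct and essentially complete. The reduction to the finitely additive measure $\mu(A)=Tq(1_A)$ on subsets of a fixed countable $\N_0\subseteq\Gamma$ is sound, and your embedding $j$ is fine once one notes (as your parenthetical does) that ``extends'' means the explicit formula $j\bigl((a_n)\bigr)=q\bigl(\sum_n a_n 1_{A_n}\bigr)$ --- the span of the $\delta_n$ is of course not dense in $\ell_\infty$ --- with isometry coming from the same distance computation that drives \Cref{L:c0A} of the paper. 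For the exhaustiveness step, the form of Rosenthal's theorem you need is indeed available in the already-cited paper \cite{Ros}: either thin first using \cite[Remark~1 (page~30)]{Ros} to make $Tj$ an embedding on a $c_0(M_1)$ and then apply the $\ell_\infty$-fixing result to $Tj|_{\ell_\infty(M_1)}$, or follow your parenthetical route ($\delta_n\to0$ weakly in $\ell_\infty$, and weakly compact operators out of the Dunford--Pettis space $\ell_\infty$ are completely continuous, so $\inf_n\lVert Tj\delta_n\rVert>0$ forces non-weak-compactness, whence $\ell_\infty$-fixing by Rosenthal). The heart of your proof, the passage from small to zero, checks out in every detail: $\eta(A)=\sup_{B\subseteq A}\lVert\mu(B)\rVert$ is monotone and subadditive because $\mu(B)=\mu(B\cap A)+\mu(B\setminus A)$; it inherits exhaustiveness because witnesses $B_n\subseteq A_n$ with $\lVert\mu(B_n)\rVert\geqslant\eta(A_n)/2$ are automatically infinite ($\mu$ kills finite sets); and the chain $B_1\supseteq B_2\supseteq\cdots$ with $\eta(B_k)<\frac1k$ together with a pseudo-intersection $B_0$ gives $\eta(B_0)\leqslant\eta(B_0\cap B_k)+\eta(B_0\setminus B_k)<\frac1k$ for every $k$, so $\eta(B_0)=0$ and every subset of $B_0$ is $\mu$-null. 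Splitting $B_0$ into infinitely many disjoint infinite pieces then yields an isometric (hence in particular isomorphic) copy of $c_0$ inside $\ker T$, slightly more than the theorem asks. Your submeasure extraction is a Drewnowski-type argument; whether or not it coincides with Wojt\'{o}wicz's original proof, it is a valid and self-contained one.
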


\begin{Rem}\label{R:countably_supp_ideal}
  Let $X = \ell_{\infty}^c(\Gamma)$, and write $\mathscr{B}(X)$ for the Banach algebra of bounded opera\-tors~$X\to X$. Johnson, Kania and Schechtman \cite[Theorems~1.1 and~3.14]{JKSch}  have shown that the set $\mathscr{S}_{X}(X) = \{ T\in\mathscr{B}(X) : T\ \text{does not fix a copy of}\ X\}$ is the unique maximal ideal of~$\mathscr{B}(X)$ and that $T\in\mathscr{S}_X(X)$ if and only if $T$ does not fix a copy of~$c_0(\Gamma)$. Suppose that ~$\Gamma$ has uncountable cofinality. Then we can combine this result with \Cref{p:countably_supp_ideal} to deduce that for every  $T\in\mathscr{S}_X(X)$,   there exists $A \in [\Gamma]^{\Gamma}$ such that $\ell_{\infty}^c(A) \subseteq \ker T$.
\end{Rem}
\appendix
\section{The proof of Lemma~\ref{CombLemma}}\label{app:A}
\begin{proof}[Proof of Lemma~{\normalfont{\ref{CombLemma}}}]
  We begin by recalling the statement of the lemma and introduce some additional notation that will help us present its proof concisely. The set-up is that  $\sigma_1,\ldots,\sigma_m$ are scalars, while $A_1,\ldots,A_m$ are  distinct subsets  of~$\Gamma$ for which we define
  \[ A^d_j = A_j\setminus\underset{k\ne j}{\bigcup_{k=1}^m}A_k\qquad (j\in\{1,\ldots,m\}).  \]
  Then the claim is that   the functions $\lhs,\rhs\colon \Gamma\to\mathbb{K}$ given by 
    \[ \lhs= \sum_{j=1}^m\sigma_j \one_{A^d_j}\qquad\text{and}\qquad \rhs =  \sum_{\emptyset\ne N\subseteq\{1,\ldots,m\}} (-1)^{\lvert N\rvert+1}\Bigl(\sum_{j\in N}\sigma_j\Bigr)\one_{\bigcap_{j\in N} A_j} \]
are equal.

    We verify this equality pointwise. Take $\gamma\in\Gamma$, define \mbox{$M_\gamma=\bigl\{j\in\{1,\ldots,m\} : \gamma\in A_j\bigr\}$}, and observe that $\gamma\in\bigcap_{j\in N} A_j$ if and only if $N\subseteq M_\gamma$. 
Clearly  $\lhs(\gamma) = 0 = \rhs(\gamma)$ if $M_\gamma=\emptyset$, while $\lhs(\gamma)=\sigma_j=\rhs(\gamma)$ if $M_\gamma=\{j\}$ for some $j\in\{1,\ldots,m\}$. Hence we may suppose that $M_\gamma$ has at least two elements. In this case $\gamma\in \bigcup_{k=1,\, k \neq j}^mA_k$ for every $j\in\{1,\ldots, m\}$, so $\lhs(\gamma)=0$. On the other hand, using the observation above, we find
\[ \rhs(\gamma) = \!\!\sum_{\emptyset\ne N\subseteq M_\gamma}\!\! (-1)^{\lvert N\rvert+1}\Bigl(\sum_{j\in N}\sigma_j\Bigr) =  \sum_{k=1}^{\lvert M_\gamma\rvert} (-1)^{k+1} \sum_{\substack{N\subseteq M_\gamma\\ \lvert N\rvert = k}} \Bigl(\sum_{j\in N}\sigma_j\Bigr) 
= \sum_{k=1}^{\lvert M_\gamma\rvert} (-1)^{k+1} \sum_{j\in M_\gamma} a_{k,j} \sigma_j, \]
where $a_{k,j}$ denotes the number of subsets~$N$ of~$M_\gamma$ of size~$k$ such that~$j\in N$. Since $a_{k,j} = \binom{\lvert M_\gamma\rvert-1}{k-1}$, by re-indexing and applying the binomial formula, we conclude that
\[  \rhs(\gamma) = 
  \sum_{\ell=0}^{\lvert M_\gamma\rvert-1}(-1)^{\ell}\binom{\lvert M_\gamma\rvert-1}{\ell}\sum_{j\in M_\gamma}\sigma_j = (-1+1)^{\lvert M_\gamma\rvert-1}\sum_{j\in M_\gamma}\sigma_j = 0, \]
as desired.
\end{proof}

\begin{Rem}
The aim of this remark is to explain why we may view \Cref{CombLemma}  as a weighted variant of the Inclusion--Exclusion Principle. 
Let~$\mu$ be  a scalar-valued, finitely additive measure defined on a Boolean subalgebra~$\mathcal{F}$ of the power set of~$\Gamma$. 
Then, taking $\sigma_1=\cdots =\sigma_m =1$ in \Cref{CombLemma} and  integrating both sides of~\eqref{CombLemma:eq2} with respect to~$\mu$,  we obtain
\begin{equation*}
\sum_{j=1}^m \mu (A_j^d) = \sum_{\emptyset\ne N\subseteq\{1,\ldots,m\}} (-1)^{\lvert N\rvert+1} \cdot \lvert N\rvert \cdot  \mu\Bigl(\bigcap_{j\in N} A_j \Bigr)\qquad (m\in\mathbb{N},\,A_1,\ldots,A_m \in\mathcal{F}). \end{equation*} 
\end{Rem}

\begin{Ack}
  The first-named author is grateful to Max Arnott for bringing this problem to his attention. He acknowledges with thanks the financial support provided by the Jagiellonian Scholarship ``Initiative of Excellence --- Jagiellonian University'' awarded by the Faculty of Mathematics and Computer Science. We would also like to thank Yemon Choi and Tommaso Russo for some enlightening conversations, and Jes\'{u}s Castillo for mentioning the original reference~\cite{Ka} to us. Finally, we thank the referee for their careful reading of our paper and helpful suggestions.
\end{Ack}


\bibliographystyle{amsplain}

\end{document}